\newtheorem{thm}{Theorem}[section]
\newtheorem{cor}[thm]{Corollary}
\newtheorem{lem}[thm]{Lemma}
\newtheorem{prop}[thm]{Proposition}
\newtheorem{quest}[thm]{Question}
\theoremstyle{definition}
\newtheorem{defn}[thm]{Definition}
\numberwithin{equation}{section}
\newcommand{\diam}{\operatorname{diam}}
\newcommand{\im}{\operatorname{im}}
\renewcommand{\phi}{\varphi}
\newcommand{\cl}{\operatorname{cl}}
\renewcommand{\int}[1]{\operatorname{int}\bigl(#1\bigr)}
\renewcommand{\S}{\textbf{S}}
\begin{document}


\baselineskip=17pt


\date{}
\title{Homomorphims of fundamental groups of planar continua}
\author{Curtis Kent}
\maketitle



\begin{abstract}
We prove that every homomorphism from the fundamental group of a planar Peano continuum to the fundamental group of a planar  or one-dimensional Peano continuum is induced by a continuous map up to conjugation.  

This is then used to provide a family of uncountable many planar Peano continua with pairwise non-isomorphic fundamental groups all of which are not homotopy equivalent to a one-dimensional space. 
\end{abstract}



\setcounter{section}{0}

\section{Introduction }

Every continuous map between topological spaces induces a homomorphism of their fundamental groups.  It is a classical result that the converse also holds for CW-complexes and simplicial complexes.  However for more complicated spaces the converse can be false.  For example any inner automorphism of the fundamental group of a one-dimensional space which is not locally simply connected cannot be induced by a continuous map.   Katsuya Eda  was the first to prove the converse does hold upto conjugation when considering homomorphisms between fundamental groups of one-dimensional Peano continua \cite{eda}.

\textbf{Theorem A} \emph{ Let $X$ be one-dimensional Peano continua and $Y$ any one-dimensional metric space.  For every homomorphism $\phi: \pi_1(X,x_0)\to \pi_1(Y,y_0)$, there exists a path $\alpha: (I,0,1)\to (Y,y_0,y)$ and a continuous function $f:X\to Y$ such that $\hat \alpha\circ\phi = f_*$ where $\hat \alpha$ is the change of base point isomorphism induced by the path $\alpha$. In addition, if $\phi$ has uncountable image then $\alpha$ is unique up to homotopy rel endpoints. }


Understanding the extent to which homomorphisms of fundamental groups are induced by continuous maps on the underlying topological spaces provides an additional tool to relate the homotopy type of locally complicated spaces with their fundamental groups. (See  \cite{cc3}, \cite{eda} and \cite{ConnerKentpreprint}.)  Specifically the study of homomorphisms has lead to Eda proving that the fundamental group is a perfect invariant of homotopy type for one-dimensional Peano continua \cite{eda7}.  As well, it is the key tool in the proof that set of points at which a space is not semi-locally simply connected is constructible from the fundamental group for one-dimensional Peano continua \cite{ce} or planar Peano continua \cite{ConnerKentpreprint}. 

In \cite{ConnerKentpreprint}, the author together with Greg Conner show that many of the known results about fundamental groups of one-dimensional spaces extend to planar spaces. Specifically it is proved that any homomorphisms from the fundamental group of a one-dimensional Peano continuum to the fundamental group of a planar Peano continuum is induced by a continuous map after composing with a change of base point isomorphism (Theorem A when $Y$ is a planar Peano continuum).   Here we will prove the following theorem.

\textbf{Theorem \ref{main}} \emph{
Let $\phi:\pi_1(X,x_0) \to\pi_1(Y,y_0)$ be a homomorphism from the fundamental group of a planar Peano continuum $X$ into the fundamental group of a one-dimensional or planar Peano continuum $Y$. Then there exists a continuous function $f:X \to Y$ and a path $\alpha:(I,0)\to (Y,y_0)$, with the property that $f_* =\widehat \alpha \circ\phi$.  In addition, if $\phi$ has uncountable image then $\alpha$ is unique up to homotopy rel endpoints. 
}

In one-dimensional spaces every path class contains a unique minimal representative and every other representative can be homotoped to the unique minimal one be removing backtracking \cite{cc3}.  A loop in a one-dimensional space is called \emph{reduced} if it is the unique minimal representative in its path class.  As well, every one-dimensional Peano continuum deformation retracts to a one-dimensional Peano continuum  in which every point is contained in some homotopically essential reduced loop \cite{cm}.  Two of the difficulties in the planar case is the lack of a canonical deformation retract and the lack of representatives for path classes which are analogous to reduced paths in one-dimensional spaces.


\subsection*{Homotopy dimension}
A space is \emph{homotopically one-dimensional} if it is homotopy equivalent to a one-dimensional space.

Cannon and Conner asked the following question in \cite{cc4}.

\begin{quest}\label{quest1D}
If $X$ is a planar Peano continuum whose fundamental group is isomorphic to the fundamental group of some one-dimensional Peano continuum, is it true that $X$ is homotopic to a one-dimensional Peano continuum?
\end{quest}

Let $\S$ be the Sierpinski curve in $\mathbb R^2$ obtained by the standard Cantor construction performed on the unit square in the plane.  Let $\S_i$ be the planar Peano continuum obtained from $\S$ by filling in $i$ of the removed discs, i.e. $\S_i = \S \cup \bigl(\bigcup\limits_{n=1}^i D_n\bigr)$ where $D_n$ are distinct bounded components of $\mathbb R^2\backslash\S$.  Cannon, Conner and Zastrow showed that $\S_1$  is not homotopy equivalent to any one-dimensional space \cite{ccz}.   Their example, $\S_1$, illustrates that there exist some rigidity in planar sets and at least motivates why Question \ref{quest1D} is interesting.    Karimov, Repovs, Rosicki, and Zastrow give additional examples of planar sets spaces which are not homotopically one-dimensional in \cite{KarimovRepovRosickiZastrow}.

By applying Theorem \ref{main}, we will show that $\S_i$ cannot have the same fundamental group as any one-dimensional Peano continua and that the $\S_i$, $\S_j$ don't have isomorphic fundamental groups for $i\neq j$.  This gives evidence towards an answer to Question \ref{quest1D}.  In a subsequent paper, the author together with Greg Conner uses Theorem \ref{main}  to give a complete answer to Question \ref{quest1D} \cite{ConnerKentpreprint2}.

As an application to Theorem \ref{main} we prove the following result.

\textbf{Corollary \ref{uncountable}}\emph{
There exists an uncountable family of planar Peano continua with pairwise non-isomorphic fundamental groups, all of which are not isomorphic to the fundamental group of any one-dimensional Peano continuum.
}

Our family of examples is constructed by filling infinitely many of the removed squares of $\S$ in a discrete fashion and then studying the limit set of the filled squares.

\section{Planar to one-dimensional or planar}

We will use $\mathbb D$ to denote the unit disc in the Euclidean plane $\mathbb R^2$ and $I$ to denote the interval $[0,1]$.  For a metric space $X$, let $B_r^X(x) = \{ y\in X \ | \ d(x,y)<r\}$ and $S_r^X(x)= \{ y\in X \ |\ d(x,y) = r\}$. For planar sets $X$, $B_r^X(x) = B_r^{\mathbb R^2}(x)\cap X$ and  $S_r^X(x) = S_r^{\mathbb R^2}(x)\cap X$.  

If $X$ is a planar set, we will use $\int{X}$ to denote the interior of $X$ as a subset of the plane, $\cl (X)$ for the closure of $X$ in the plane and  $\partial X$ for $X\backslash \int{X}$.

\begin{thm}\label{basecase}[Eda \cite{eda}, Conner and Kent \cite{ConnerKentpreprint}]
Let $\phi:\pi_1(X,x_0) \to\pi_1(Y,y_0)$ a homomorphism from the fundamental group of a one-dimensional Peano continuum $X$ into the fundamental group of a one-dimensional or planar Peano continuum $Y$. Then there exists a continuous function $f:X \to Y$ and a path $\alpha:(I,0)\to (Y,y_0)$, with the property that $f_* =\widehat \alpha \circ\phi$.  Additionally; if the image of $\phi$ is uncountable, then $\alpha$ is unique up to homotopy rel endpoints.
\end{thm}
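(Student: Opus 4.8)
The plan is to establish the existence of the pair $(f,T)$ first and to extract the uniqueness of $T$ afterward, exploiting the common structural features of the two admissible targets. I would begin by putting $X$ into a standard form: a one-dimensional Peano continuum admits a cofinal system of finite-graph approximations with compatible retractions, so that $\pi_1(X,x_0)$ is realized as a subgroup of an inverse limit $\varprojlim \pi_1(G_n)$ of free groups, and every element is named by a reduced, possibly transfinite, word whose finite truncations are its images in the $G_n$. The construction of $f$ then proceeds pointwise. For $p\in X$ I would fix a neighborhood basis of loops that shrink to $p$, and show that $\phi$ carries such a shrinking family to a family of classes in $\pi_1(Y,y_0)$ that is again shrinking, i.e. eventually supported in arbitrarily small neighborhoods of a single point of $Y$; that point is declared to be $f(p)$.

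The engine driving this is that $\pi_1(Y,y_0)$ is noncommutatively slender and $Y$ is homotopically Hausdorff for both admissible target types. Slenderness forces the homomorphic image of the (Hawaiian-earring-type) subgroup generated by a shrinking family of loops to again localize at a point, which makes $f(p)$ well defined and independent of the chosen shrinking family; the homotopically Hausdorff condition pins this limit point down uniquely, so that $f$ is single-valued. I would verify continuity of $f$ by applying the same localization estimate to nearby points. To recover $\phi$ up to a change of basepoint, note that $f(x_0)$ need not equal $y_0$: choosing any path $T$ from $y_0$ to $f(x_0)$, one compares $f_*$ with $\widehat T\circ\phi$ on the generating loops, where they agree by construction, and then density of those loops together with continuity of both homomorphisms yields $f_*=\widehat T\circ\phi$ on all of $\pi_1(X,x_0)$.

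For uniqueness of $T$ when $\im\phi$ is uncountable, suppose $\widehat T\circ\phi=\widehat{T'}\circ\phi$ with $T(1)=T'(1)=f(x_0)$. Then conjugation by the loop $\ell=T\cdot\overline{T'}$ based at $y_0$ fixes $\im\phi$ pointwise, so $\ell$ centralizes the uncountable subgroup $\im\phi$ of $\pi_1(Y,y_0)$. The final ingredient is that in fundamental groups of one-dimensional and planar Peano continua the centralizer of any nontrivial element is small, and in particular cannot contain an uncountable subgroup; hence $\ell$ is trivial and $T\simeq T'$ rel endpoints.

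The crux of the whole argument, and the reason the planar target case (Conner and Kent) is substantially harder than Eda's one-dimensional target case, is the localization step: showing that $\phi$ sends shrinking families of loops to shrinking families, and controlling this uniformly enough to deduce continuity of $f$. For planar $Y$ the fundamental group is considerably more intricate than in the one-dimensional case, so packaging the slenderness and homotopically-Hausdorff input in a form strong enough to run the pointwise construction is the main technical obstacle I would expect to confront.
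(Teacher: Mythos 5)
First, a point of order: this paper does not prove Theorem \ref{basecase} at all. It is stated with the attribution Eda \cite{eda}, Conner and Kent \cite{ck}, and is used purely as a black box in the proof of Theorem \ref{main} (it is applied to the one-dimensional skeleton $X^{(1)}$ constructed there). So there is no in-paper argument to compare yours against; your sketch has to be judged as a reconstruction of the cited proofs, which occupy two substantial papers.

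Judged that way, it has a genuine error at its core. You say the ``engine'' is that $\pi_1(Y,y_0)$ is noncommutatively slender for both admissible targets. That is false in every case where the theorem has content: if $Y$ is not semi-locally simply connected at some point $y$, choose essential loops $\gamma_n$ at $y$ with $\diam{\gamma_n}\to 0$; sending the $n$-th circle of the Hawaiian earring to $\gamma_n$ defines a continuous map, and the induced homomorphism into $\pi_1(Y,y)$ kills no generator loop, so $\pi_1(Y,y_0)$ is not n-slender. (If $Y$ \emph{is} semi-locally simply connected, its fundamental group is a finitely generated free group and the theorem is classical.) The groups that actually are n-slender here are the free groups arising as finite approximations of $Y$, and the localization statement you want --- that $\phi$ carries shrinking families of loops to families converging, in the homotopically Hausdorff sense, to a well-defined point $f(p)$ --- is precisely the hard technical content of \cite{eda} and \cite{ck}, not a corollary of a slenderness property of the target group. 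Two further steps are asserted rather than proved. The claim that $f_*=\widehat T\circ\phi$ follows from agreement on generating loops ``by density and continuity'' is empty as stated: no topology on these uncountable groups is specified for which both homomorphisms are continuous and your family of loops is dense; in the actual proofs agreement is verified on every loop, using canonical (reduced) representatives. And your uniqueness argument rests on the assertion that a nontrivial element of $\pi_1(Y,y_0)$ cannot centralize an uncountable subgroup --- this is indeed the right reduction, but it is itself a nontrivial theorem about fundamental groups of one-dimensional and planar Peano continua, which you would need to prove rather than cite to intuition.
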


\begin{lem}\label{cutoff}
Suppose the $f:\partial \mathbb D \to X$ is a continuous map into a planar or one-dimensional Peano continuum.  If $f$ is null homotopic then there exists a map $h: \mathbb D \to X$ such that $\diam {h(\mathbb D)}\leq \diam{f(\partial\mathbb D)}$.
\end{lem}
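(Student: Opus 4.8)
The plan is to reduce the problem to confining a filling disc to a metric ball whose radius equals $\diam\bigl(f(\partial\mathbb D)\bigr)$. Write $d=\diam\bigl(f(\partial\mathbb D)\bigr)$ and fix a point $p=f(z_0)$ with $z_0\in\partial\mathbb D$. Every point of $f(\partial\mathbb D)$ lies within $d$ of $p$, so $f(\partial\mathbb D)\subseteq \overline{B_d^X(p)}$, and by the triangle inequality $\diam\bigl(\overline{B_d^X(p)}\bigr)\le 2d$. Thus it suffices to produce any extension $h:\mathbb D\to X$ of $f$ whose image is contained in $\overline{B_d^X(p)}$ (or, in the planar case, in the Euclidean ball $\overline{B_d^{\mathbb R^2}(p)}\cap X$, which also has diameter at most $2d$); this is the source of the factor $2$. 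Since $f$ is null homotopic it extends to some map $g:\mathbb D\to X$, and the entire difficulty is to replace $g$ by a filling whose image does not leave this ball.

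Suppose first that $X$ is planar, and set $\Gamma=f(\partial\mathbb D)\subseteq X\subseteq\mathbb R^2$, a Peano continuum of diameter $d$. I would analyze the bounded complementary domains $\{D_j\}$ of $\Gamma$ in the plane together with the winding number of $f$ about points of $\mathbb R^2\setminus\Gamma$. Because $f$ is null homotopic in $X$, its winding number about any point $q\notin X$ vanishes (the winding number factors through $\pi_1(\mathbb R^2\setminus\{q\})\cong\mb Z$). Consequently, every domain $D_j$ about which $f$ winds nontrivially can contain no point of $\mathbb R^2\setminus X$, i.e.\ $D_j\subseteq X$. Let $\Omega$ be the union of $\Gamma$ with all such domains; then $\Omega\subseteq X$, and since $\Omega\subseteq\hull{\Gamma}\subseteq\overline{B_d^{\mathbb R^2}(p)}$ we have $\diam(\Omega)\le d$. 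By construction $f$ has winding number $0$ about every point of $\mathbb R^2\setminus\Omega$, and I would use this, together with the local connectivity of $X$, to build a filling $h:\mathbb D\to\Omega$ of $f$; composing with the inclusion $\Omega\hookrightarrow X$ then finishes this case with the stronger bound $\diam\bigl(h(\mathbb D)\bigr)\le d$.

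When $X$ is one-dimensional the winding-number argument is unavailable, and instead I would exploit the structure theory of one-dimensional Peano continua (cf.\ Theorem~\ref{basecase}) to show that the null homotopy of $f$ can be compressed into the subcontinuum $\Gamma=f(\partial\mathbb D)$ itself, or at worst into $\overline{B_d^X(p)}$; since $\diam(\Gamma)\le d$ this again yields the desired bound. \textbf{The main obstacle} in both cases is this final compression step: upgrading the combinatorial/winding data (that $f$ is inessential away from the small set) to an honest extension $h$ whose image stays inside the small set. For open planar sets this is classical, since such sets are homotopy equivalent to wedges of circles and the homotopy class of a loop is detected by its winding numbers; the work is to transfer this to the closed, possibly badly behaved continuum $\Omega$ (respectively $\Gamma$), which is where local connectivity of the Peano continuum and an approximation of $g$ by fillings into nicer subsets must be used. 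Once the filling into the small set is in hand, the diameter estimate is immediate from the triangle inequality.
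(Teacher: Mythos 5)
Your preliminary reductions are all correct: the triangle-inequality bound, the vanishing of the winding number of $f$ about every $q\notin X$, the consequence that every complementary domain of $\Gamma=f(\partial\mathbb D)$ about which $f$ winds nontrivially lies in $X$, and the estimate $\diam{\Omega}\le\diam{f(\partial\mathbb D)}$ since $\Omega$ lies in the convex hull of $\Gamma$. But the step you yourself label \emph{the main obstacle} --- passing from ``$f$ has zero winding number about every point of $\mathbb R^2\setminus\Omega$'' to an actual extension $h:\mathbb D\to\Omega$ --- is not a final technical detail; it is the entire content of the lemma, and your proposal defers it rather than proving it. That a loop in a planar Peano continuum with vanishing winding numbers about all complementary points can be filled inside a controlled subset is a deep theorem of Cannon and Conner, and the paper's own ``proof'' of this lemma consists precisely of citing it: by \cite{cc3}, every null homotopic loop in a planar Peano continuum can be null homotoped within the intersection of $X$ with the convex hull of the loop's image, a set whose diameter equals $\diam{f(\partial\mathbb D)}$. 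A further point your sketch does not address: to work inside $\Omega$ at all, one needs $\Omega$ to be a Peano continuum, which requires the classical (and again nontrivial) fact that the complementary domains of a planar Peano continuum form a null sequence.

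The one-dimensional case has the same character. Your appeal to ``structure theory'' to compress the null homotopy into $\Gamma$ is exactly the known theorem (Curtis--Fort, Eda; see also \cite{cc3}) that in a one-dimensional space a null homotopic loop admits a null homotopy whose image lies in the image of the loop; this is why the paper dismisses that case as trivial. So your outline has the right shape, and would even sharpen the constant from $2$ to $1$, but as written it is a plan whose two essential steps are precisely the unproven theorems; to turn it into a proof you must either cite those results explicitly or prove them, and the latter is far beyond what your sketch supplies.
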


The lemma is trivial when $X$ is one-dimensional since every null homotopic loop factors through a dendrite.  Cannon and Conner in \cite{cc3} prove that every null homotopic loop in a planar Peano continuum bounds a disc contained in the convex hull of its image.

\begin{lem}\label{opensquares}
Every bounded open set $U$ of $\mathbb R^2$ is the union of a sequence of dyadic squares with disjoint interiors whose diameters form a null sequence.  In addition, the squares can be chosen such that if $A_i$ is the union of squares with side length at least $\frac{1}{2^i}$, then $U\backslash A_i \subset \mathcal N_{\frac{\sqrt 2}{2^{i-1}}}(\partial U)$.

\end{lem}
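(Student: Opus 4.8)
The plan is to produce the decomposition by the standard greedy selection of maximal dyadic squares, a Whitney-type construction. Recall that a closed dyadic square of side $2^{-n}$ is one of the form $[k2^{-n},(k+1)2^{-n}]\times[l2^{-n},(l+1)2^{-n}]$, that any two dyadic squares are either nested or have disjoint interiors, and that each dyadic square of side $2^{-n}$ has a unique \emph{parent}, namely the dyadic square of side $2^{-(n-1)}$ containing it. Since $U$ is bounded, any dyadic square of side exceeding $\diam U$ fails to be contained in $U$, so the parent condition below is never vacuous. I would then declare a dyadic square $Q$ to be \emph{selected} if $Q\subseteq U$ but its parent $\hat Q$ is not contained in $U$, and let $\mathcal S$ be the collection of all selected squares.

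Next I would verify the three qualitative assertions. Each selected square is closed and contained in $U$, so $\bigcup\mathcal S\subseteq U$; conversely, for $x\in U$ openness gives a small dyadic square about $x$ inside $U$, while boundedness forces sufficiently large dyadic squares about $x$ to leave $U$, so along the nested chain of dyadic squares through $x$ there is a largest one contained in $U$, and by construction it is selected and contains $x$. Hence $\bigcup\mathcal S=U$. If two selected squares $Q\subsetneq Q'$ were nested, then $Q'$ would contain the parent $\hat Q$ of $Q$ and satisfy $Q'\subseteq U$, forcing $\hat Q\subseteq U$ and contradicting the selection of $Q$; by the nesting dichotomy, distinct members of $\mathcal S$ therefore have disjoint interiors. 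Finally, within the bounded set $U$ there are only finitely many dyadic squares of side in $[2^{-m},\diam U]$, so for each $\varepsilon>0$ only finitely many members of $\mathcal S$ have diameter $\geq\varepsilon$; enumerating $\mathcal S$ in order of nonincreasing side length exhibits the diameters as a null sequence.

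The substantive point is the neighborhood estimate, and this is where I expect the real work to lie. Let $A_i$ be the union of the selected squares of side $\geq 2^{-i}$, and take $x\in U\setminus A_i$. Since $\bigcup\mathcal S=U$, the point $x$ lies in some selected $Q$, and $x\notin A_i$ forces $Q$ to have side $2^{-n}$ with $n\geq i+1$. Its parent $\hat Q$ then has side $2^{-(n-1)}\leq 2^{-i}$, hence $\diam\hat Q=\sqrt2\,2^{-(n-1)}\leq \sqrt2\,2^{-i}$; moreover $\hat Q$ is convex, contains $x$, and contains some point $p\notin U$. Travelling along the segment $[x,p]\subseteq\hat Q$ from $x\in U$ to $p\notin U$, connectedness produces a first exit point $q=x+t^*(p-x)$ lying in $\overline U\setminus U$ (a point of $\partial U$), with $q\in\hat Q$. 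Thus $d(x,\partial U)\leq d(x,q)\leq\diam\hat Q\leq \sqrt2\,2^{-i}$, which already places $x$ in $\mathcal N_{\sqrt2/2^{i}}(\partial U)\subseteq \mathcal N_{\sqrt2/2^{i-1}}(\partial U)$; note the stated bound is met with a factor of two to spare.

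The only genuine obstacle is keeping the bookkeeping of scales straight in the estimate — correctly translating ``not covered by a selected square of side $\geq 2^{-i}$'' into the bound $\diam\hat Q\leq\sqrt2\,2^{-i}$ on the parent, and then using the convexity of $\hat Q$ together with connectedness of the segment to place a frontier point of $U$ within $\diam\hat Q$ of $x$. The remaining verifications (covering, disjointness of interiors, and the null sequence) are routine consequences of the nesting dichotomy for dyadic squares and the boundedness of $U$.
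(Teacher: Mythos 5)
Your proof is correct, but your decomposition is not the one the paper constructs. You build the classical Whitney decomposition: select exactly those dyadic squares $Q\subseteq U$ whose parent $\hat Q$ is not contained in $U$, and verify everything from the nesting dichotomy plus the parent test. The paper instead works level by level: $A_0$ is the union of all side-$1$ grid squares contained in $U$, and inductively $A_i$ adjoins every side-$2^{-i}$ grid square contained in $\overline{U\setminus A_{i-1}}$, the closure being taken so that new squares may share edges with the old ones. The two rules produce genuinely different collections (yours may contain squares of side greater than $1$ when $U$ is large; the paper's squares all have side at most $1$), though both satisfy the lemma. Your route buys a cleaner and more complete verification: disjointness of interiors and covering follow from maximality, and your neighborhood estimate is proved in full and is sharper ($d(x,\partial U)\le \sqrt2\,2^{-i}$ rather than $\sqrt 2\,2^{-(i-1)}$) via the segment from $x$ to a point of $\hat Q\setminus U$, whereas the paper simply asserts its estimate is immediate. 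Your construction also sidesteps a wrinkle in the paper's formulation: at stages $i\ge 1$ the paper only requires $s\subset\overline{U\setminus A_{i-1}}$, which does not literally force $s\subset U$ (for $U$ the open unit square this condition admits all four quarter squares, whose union is $[0,1]^2\ne U$), so the paper's condition must implicitly include $s\subset U$; your parent criterion has no such issue. What the paper's route buys is notational: the stage-by-stage objects $Q_i$, $D_i$, $A_i$ are reused verbatim later to define the one-dimensional skeleton $X^{(1)}$, which is the paper's stated reason for writing out this particular construction. One cosmetic point in your write-up: for $x$ on a grid line there is no single ``nested chain of dyadic squares through $x$''; the fix is to take any sufficiently small dyadic square containing $x$ and contained in $U$ and pass to the last of its ancestors still contained in $U$, which is selected and contains $x$.
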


This is standard and well known.  We present a proof to introduce notation that we will use later.

\begin{proof}
Let $\chi_i = \bigl\{ (x,y)\mid 0\leq x\leq 1/2^i, \ 0\leq y\leq 1/2^i \bigr\}$ and $Q_i = \bigl\{ (n,m) + \chi_i \mid n,m\in(1/2^i)\mathbb Z\bigr\}$.  So $Q_i$ is the set of tiles in the standard tiling of the plane  by squares with side length $1/2^i$.

Let $D_0$ be the maximal subset of $Q_0$ such that $A_0 \subset U$ where $A_0 = \bigcup\limits_{s\in D_0} s$.  Then $U\backslash A_0 \subset \mathcal N_{\frac{\sqrt 2}{2^{-1}}}(\partial U)$.

We will inductively define $D_i$ and $A_i$ as follows.  Let $D_i$ be the maximal subset of $Q_i$ such that $\bigcup\limits_{s\in D_i} s\subset \cl(U\backslash A_{i-1})$.  Let $A_i = \Bigl(\bigcup\limits_{s\in D_i}s\Bigr)\cup A_{i-1}$.  We immediately have $U\backslash A_i \subset \mathcal N_{\frac{\sqrt 2}{2^{i-1}}}(\partial U)$.  Then $\bigcup\limits_{i=1}^\infty A_i = U$.

\end{proof}

The following lemma is a special case of a lemma proved by Greg Conner and Mark Meilstrup in \cite{cm}.

\begin{lem}\label{aalem1}
Let $h$ be a function from $I\times I$ into a space $Z$.  Let $\{C_i\}$ be a sequence of closed intervals with disjoint interiors which cover $I$.  Suppose that $h$ restricted to $I\times\{0,1\}$ is continuous and, for each $i$, $h$ restricted to $\cl(C_i)\times I$ is continuous.  If $\diam{\bigl\{ h\bigl(\cl(C_i)\times I\bigr)\bigr\}}$ forms a null sequence, then $h$ is continuous.
\end{lem}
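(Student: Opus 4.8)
The plan is to verify continuity of $h$ at an arbitrary point $(x_0,t_0)\in I\times I$ and to organize the argument according to how $x_0$ sits inside the partition $\{C_i\}$. If $x_0\in\int{C_i}$ for some $i$, then a whole relatively open rectangle about $(x_0,t_0)$ lies in $\overline{C_i}\times I$, so continuity of $h$ there is immediate from the assumed continuity of $h|_{\overline{C_i}\times I}$. If $x_0$ is a common endpoint of only finitely many of the $C_i$, then these finitely many closed strips cover a neighborhood of $(x_0,t_0)$, the maps $h|_{\overline{C_i}\times I}$ agree on their shared vertical edges (being restrictions of one function), and the finite pasting lemma gives continuity at $(x_0,t_0)$.

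The only remaining, and genuinely delicate, case is when $x_0$ is an accumulation point of endpoints of the $C_i$, so that infinitely many strips pile up against the vertical line $\{x_0\}\times I$ from at least one side. Here I would exploit the null-sequence hypothesis as follows. Fix $\epsilon>0$ and let $F=\{\,i:\diam{h(\overline{C_i}\times I)}\ge\epsilon\,\}$; since the diameters form a null sequence, $F$ is finite. Only finitely many strips contain $x_0$ in their closure (at most one from each side, plus possibly a degenerate $\{x_0\}$), and on each such strip $h$ is continuous at $(x_0,t_0)$; let $\delta_1$ be the minimum of the corresponding moduli. Each strip in $F$ that does not contain $x_0$ lies at positive distance from $x_0$, and there are finitely many of them, so these distances have a positive lower bound $\rho$. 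Choosing $\delta<\min\{\delta_1,\rho\}$ and shrinking further to stay within the modulus of continuity of $h|_{I\times\{0,1\}}$ at $(x_0,0)$ and $(x_0,1)$, I arrange that every strip meeting $(x_0-\delta,x_0+\delta)$ either contains $x_0$ or has image of diameter less than $\epsilon$.

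It then remains to estimate $d\bigl(h(x,t),h(x_0,t_0)\bigr)$ for $(x,t)$ in the $\delta$-neighborhood. If $x$ lies in a strip containing $x_0$, the choice of $\delta_1$ finishes the estimate. Otherwise $x$ lies in a strip $C_m$ of small image, and I would route from $(x,t)$ to $(x_0,t_0)$ in controlled steps: first move vertically within $\overline{C_m}\times I$ to the edge point $(x,0)$, paying at most $\diam{h(\overline{C_m}\times I)}<\epsilon$; then move along the edge $I\times\{0\}$ toward $x_0$, paying at most the edge modulus; and finally reconcile the edge value $h(x_0,0)$ with $h(x_0,t_0)$ through the strip containing $x_0$. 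The main obstacle is precisely this last reconciliation: making the limit obtained from the accumulation side agree with $h(x_0,t_0)$ forces one to combine the null-diameter condition and the continuity along $I\times\{0,1\}$ simultaneously, since the shrinking strips pin the relevant edge values of $h$ near $x_0$ to a common limit, and it is this pinning that ultimately controls the vertical discrepancy. Once this uniform control is in place, adding the three estimates gives $d\bigl(h(x,t),h(x_0,t_0)\bigr)<3\epsilon$, establishing continuity at $(x_0,t_0)$ and hence everywhere.
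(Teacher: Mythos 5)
Your first two cases are correct, and your third case sets off along the same route as the paper's own proof: compare $h(x,t)$ with the edge value $h(x,0)$ using the small image diameter of the strip, then move along the bottom edge toward $x_0$. The gap is precisely the step you defer to the end --- reconciling $h(x_0,0)$ with $h(x_0,t_0)$ --- and it is not a postponable technicality but a missing hypothesis: nothing in the statement controls the vertical fiber $\{x_0\}\times I$ of the strip containing $x_0$, so the ``pinning'' you invoke does bound the approaching values near $h(x_0,0)$, but says nothing about $d\bigl(h(x_0,0),h(x_0,t_0)\bigr)$. In fact the lemma as stated is false. Take $Z=\mathbb{R}$, $C_0=[\tfrac12,1]$, and $C_n=\bigl[\tfrac12-\tfrac{1}{n+1},\tfrac12-\tfrac{1}{n+2}\bigr]$ for $n\geq 1$; these are closed intervals with disjoint interiors covering $I$. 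Define $h(x,t)=2(1-x)\sin(\pi t)$ for $x\in C_0$ and $h(x,t)=0$ for $x\in[0,\tfrac12)$. Then $h$ is continuous on each $\overline{C_i}\times I$, vanishes identically on $I\times\{0,1\}$, and the sequence of diameters is $(1,0,0,\dots)$, a null sequence; yet $h$ is discontinuous at $(\tfrac12,\tfrac12)$, since $h(\tfrac12,\tfrac12)=1$ while $h(x,\tfrac12)=0$ for all $x<\tfrac12$. Your three-step estimate breaks exactly at the third step: there the discrepancy equals $1$, and no choice of $\delta$ or $\epsilon$ can remove it.

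What is needed --- and what is true in the paper's application --- is the additional hypothesis that $h$ is constant on $\{e\}\times I$ for every endpoint $e$ of every $C_i$; in the application, each square strip carries a homotopy rel endpoints, so each such fiber is a single point. Granting this, your argument closes: a point $x_0$ at which infinitely many nondegenerate strips accumulate cannot lie in the interior of any strip, hence is an endpoint of every strip containing it, so $h(x_0,t_0)=h(x_0,0)$, the third term vanishes, and your estimate goes through (the first two cases are untouched). You should also be aware that the paper's own proof has the identical gap: it shows $d\bigl(h(x_n,y_n),h(x_n,0)\bigr)\to 0$, which together with edge continuity gives $h(x_n,y_n)\to h(x_0,0)$, and then concludes continuity without ever arguing that $h(x_0,y_0)=h(x_0,0)$. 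So your proposal mirrors the paper's argument, including its flaw; the honest repair, for both, is to state and use the endpoint-fiber condition.
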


\begin{proof}
Consider a sequence $(x_n, y_n) \to (x_0,y_0)$.  For each $n$, choose an $i_n$ such that $x_n\in C_{i_n}$.  If $\{C_{i_n}\}$ is
finite then by restricting $h$ to $\bigl(\cup_n C_{i_n}\bigr)\times Y$ we have $h(x_n,y_n) \to h(x_0,y_0)$ be a finite application of the pasting
lemma.  If $\{C_{i_n}\}$ is infinite, then $\diam{\bigl\{ h\bigl(\cl({C_{i_n}})\times I\bigr)\bigr\}}$ a null
sequence and  $d\bigl(h(x_n,y_n), h(x_n,0)\bigr)$ converges to 0.  Thus $h$ is continuous.
\end{proof}

The following lemma is immediate from the construction of $A_i$ and the diameter condition of the squares composing $A_i$.  Alternatively it is straight forward exercise to show that given a  surjective map $f : I \to X$ how to modify it to construct a surjective map from $I$ to $X^{(1)}$.

\begin{lem}Let $X$ be a planar Peano continuum and $X^{(1)} = \partial X \cup\bigl( \bigcup_i A_i^{(1)}\bigr)$ where $A_i$ is as in Lemma \ref{opensquares} for the bounded open set $\int{X}$.  Then $X^{(1)}$ is a one-dimensional Peano continuum.\end{lem}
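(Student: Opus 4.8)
The plan is to prove the two halves of the statement separately: that $X^{(1)}$ has topological dimension at most one, and that it is a Peano continuum. Dimension is the easy half. Since $X$ is closed, $\partial X = X\setminus\int{X}$ has empty interior in $\mathbb R^2$, and $\bigcup_i A_i^{(1)}$ is a countable union of line segments, hence meager and of empty interior. Because the open squares $\int{s}$ are deleted, $X^{(1)}\subseteq X$ meets $\int{X}$ only in $\bigcup_i A_i^{(1)}$; so any planar ball contained in $X^{(1)}$ would have to lie inside $\int{X}\cap\bigcup_i A_i^{(1)}$, which is impossible. Thus $X^{(1)}$ has empty interior in the plane, and by the standard dimension characterization for planar sets $\dim X^{(1)}\le 1$; since $X^{(1)}$ contains the edges of the squares of $A_1$, it is exactly one-dimensional whenever $\int{X}\neq\emptyset$.

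For the Peano continuum assertion I would follow the second route indicated in the remark and realize $X^{(1)}$ as a continuous image of $I$. As $X$ is a Peano continuum, the Hahn--Mazurkiewicz theorem gives a continuous surjection $f\colon I\to X$. Note that $X\setminus\bigcup_s\int{s}=X^{(1)}$, where $s$ ranges over the dyadic squares of Lemma \ref{opensquares} and $\int{s}$ is the open square; hence $f$ already maps $I\setminus\bigcup_s f^{-1}(\int{s})$ into $X^{(1)}$. I would define $g\colon I\to X^{(1)}$ to agree with $f$ there, and on each component $(a,b)$ of $f^{-1}(\int{s})$ — where necessarily $f(a),f(b)\in\partial s$ — replace $f|_{[a,b]}$ by a parametrization of the \emph{shorter} of the two arcs of $\partial s$ joining $f(a)$ to $f(b)$. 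Then $g(a)=f(a)$ and $g(b)=f(b)$, so $g$ is well defined with $g(I)\subseteq X^{(1)}$; and since $f$ is onto and $g=f$ on $f^{-1}(X^{(1)})$, the map $g$ is onto $X^{(1)}$.

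It remains to prove $g$ continuous, and this is the step I expect to be the main obstacle. The closures $\overline{(a,b)}$ are closed intervals with disjoint interiors on which $g$ is continuous, while $g=f$ is continuous on their complement; so by the one-dimensional analogue of Lemma \ref{aalem1} it suffices to show that the image diameters $\diam{g(\overline{(a,b)})}$, enumerated over all components of all squares, form a null sequence. The shorter-arc choice is exactly what forces this. Each such image is an arc of $\partial s$ of diameter at most $\diam{s}$, and by Lemma \ref{opensquares} the square diameters form a null sequence; so a component whose image has diameter exceeding $\varepsilon$ must sit in one of the finitely many squares of side bounded below, and its endpoints $f(a),f(b)$ cannot be close along $\partial s$. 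Were there infinitely many such components, pigeonholing puts infinitely many in a single square $s$; these disjoint intervals have lengths tending to $0$, so along a subsequence their endpoints converge to a common $x_0\in I$, whence $f(a),f(b)\to f(x_0)$ and the shorter arc — together with its diameter — collapses, contradicting that the diameter exceeds $\varepsilon$. With the null-sequence condition established, $g$ is continuous, so $X^{(1)}=g(I)$ is a Peano continuum; combined with the dimension bound, $X^{(1)}$ is a one-dimensional Peano continuum. (Compactness and connectedness can alternatively be read off directly from the first route: $X^{(1)}$ is closed in the compact set $X$ because, by the nearness condition $\int{X}\setminus A_i\subset\mathcal N_{\frac{\sqrt 2}{2^{i-1}}}(\partial X)$, edges introduced at deep levels accumulate only on $\partial X$.)
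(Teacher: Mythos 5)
Your proof is correct and follows exactly the route the paper itself indicates: the paper gives no detailed argument, only the remark that the lemma follows either directly from the construction or by modifying a surjection $f\colon I\to X$ into a surjection onto $X^{(1)}$, and your shorter-arc replacement is precisely that second suggestion carried out. The two details you supply --- the null-sequence continuity argument (in the spirit of Lemma \ref{aalem1}) and the empty-interior bound giving $\dim X^{(1)}\le 1$ --- are the correct and natural fillings of the paper's one-line sketch.
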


\begin{thm}\label{main}
Let $\phi:\pi_1(X,x_0) \to\pi_1(Y,y_0)$ a homomorphism from the fundamental group of a planar Peano continuum $X$ into the fundamental group of a one-dimensional or planar Peano continuum $Y$. Then there exists a continuous function $f:X \to Y$ and a path $\alpha:(I,0)\to (Y,y_0)$, with the property that $f_* =\widehat \alpha \circ\phi$.  In addition, if $\phi$ has uncountable image then $\alpha$ is unique up to homotopy rel endpoints. 
\end{thm}

\begin{proof}

Let  $X^{(1)} = \partial X \cup\bigl( \bigcup_i A_i^{(1)}\bigr)$ where $A_i$ is as in Lemma \ref{opensquares} for the bounded open set $\int{X}$ and $i: X^{(1)}\to X$ be the inclusion map.  Since we are only concerned about the homomorphism up to conjugation, we may assume that $x_0\in X^{(1)}$.

Let $B = \int{X}\backslash X^{(1)}$.  Then $B$ is the disjoint union of open square discs whose diameters form a null sequence.

Fix a loop $\beta: I\to X$ in $X$.  Suppose that $\beta \bigl((t_1,t_2)\bigr)\subset \int{s}$ where $s$ is the closure of some square disc in $ B$.  Then $\beta|_{[t_1,t_2]}$ is homotopic to the line segment $\overline{\beta(t_1)\beta(t_2)}$ by a homotopy contained entirely in $\overline s$.  Let $\beta'$ be the path obtained from $\beta$ by mapping each maximal open interval in $\beta^{-1}\bigl( B\bigr)$ to the line segment connecting its endpoints. Lemma \ref{aalem1} implies that $\beta$ is homotopic to $\beta'$ which implies that $i_*$ is surjective.

By Theorem \ref{basecase}; $\phi\circ i_*: \pi_1(X^{(1)},x_0) \to \pi_1(Y,y_0)$ is conjugate to being induced by a continuous map, i.e.  $\phi\circ i_* = \widehat {\overline \alpha}\circ f_*$  where $f:X^{(1)}\to Y$ is a continuous map and $\alpha: I \to Y$ is a continuous path.

Let $s$ be the closure of a component of $B$, i.e. a square from our construction of $X^{(1)}$.  Then $f|_{\partial s}$ is a null homotopic loop in $Y$.  Thus we can extend $f$ to all of $s$ such that $\diam{\bigl(f(s)\bigr)}\leq \diam {\bigl(f(\partial s)\bigr)}$.  Doing this for all the components of $B$ defines an extension $\overline f$ of $f$ to all of $X$.  The diameter condition guarantees the continuity of $\overline f$.

Let $\beta $ be a loop in $X$.  Then there exists a loop $\beta'$ in $X^{(1)}$ homotopic (in $X$) to $\beta$.  Then
\begin{align*}\phi([\beta]) &= \phi\circ i_*([\beta']) = \widehat {\overline \alpha}\circ f_*([\beta']) \\
&= \widehat {\overline \alpha}\bigl([ f\circ \beta']\bigr)=\widehat {\overline \alpha}\bigl([ \overline f\circ \beta']\bigr)\\
&= \widehat {\overline \alpha}\bigl([ \overline f\circ \beta]\bigr)= \widehat {\overline \alpha}\circ \overline f_*\bigl([\beta]\bigr)
\end{align*}

as desired.

\end{proof}

\section*{Applications}
\setcounter{section}{2}

Let $C = \Bigl([0,1]\times[0,1]\Bigl)\backslash \Bigl((1/3,2/3) \times (1/3,2/3) \Bigl)$.  Then $\S$, the standard Sierpinski curve in $\mathbb R^2$, is constructed by iterating the process of replacing $[0,1]\times [0,1]$ with a copy of $C$ and subdividing $C$ into $8$ squares with side length $1/3$ of the original square. 

Then $\mathbb R^2\backslash \S$ is the union of countable many open squares with disjoint closures and a single unbounded component.  Let $\bigl\{D_n\bigr\}$ be an enumeration of the open squares contained in the complement of $\S$.

For $A\subset \mathbb N$, let $\S_A = \S \cup \Bigl(\bigcup\limits_{n\in A} D_n\Bigr)$; i.e. $\S_A$ is the space obtained from $\S$ by filling in the squares corresponding to $A$.  For $i\in \mathbb N$, let $\S_i = \S \cup \Bigl(\bigcup\limits_{n=1}^i D_n\Bigr)$.  
  
The following is well-know and a proof can be found in \cite{ConnerKentpreprint}.  
  
\begin{lem}\label{fix}
Suppose that $h:X\to X$ is a continuous map of a planar Peano continuum such that every loop is freely homotopic to its image under $h$. Then $h$ fixes the set points at which $X$ is not semi-locally simply connected.
\end{lem}

\begin{proof}[Sketch of proof.]
Suppose that  $X$ is not semi-locally simply connected at $x$ and $h(x)\neq x$.  Then we would be able to find an $\epsilon>0$  such that the balls $B_\epsilon(x)$ and $B_\epsilon\bigl(h(x)\bigr)$ are disjoint and  $S_\epsilon^X(x) \subsetneq S_\epsilon^{\mathbb R^2}(x)$.  This implies that $S_\epsilon^X(x)$ is the disjoint union of closed intervals.  

Since any loop is freely homotopic to its image under $h$, any sufficiently small loop in $B_\epsilon(x)$ can be homotoped into $B_\epsilon\bigl(h(x)\bigr)$.  However, any map of an annulus which takes one boundary component into $B_\epsilon(x)$ and the other into $B_\epsilon\bigl(h(x)\bigr)$ can by cut along $S_\epsilon^X(x)$ to construct a null homotopy of the boundary loops. Thus any sufficiently small loop in $B_\epsilon(x)$ must be null homotopic.  However this contradicts the assumption that $X$ is not semilocally simply connected at $x$. 

\end{proof}

Cannon, Conner, and Zastrow showed that $\S_1$ is not homotopy equivalent to a one-dimensional Peano continuum.  We can now use Theorem \ref{main}  to show even more that the fundamental group of $\S_1$ is not \emph{one-dimensional} in the following sense.

\begin{thm}\label{onefilled} For any $s_0\in \S$, $\pi_1(\S_i, s_0)$ is not isomorphic to the fundamental group of any one-dimensional Peano continuum.
\end{thm}

\begin{proof}
Suppose that there exists $X$ a one-dimensional Peano continuum such that $\pi_1(X,x_0)$ is isomorphic to $\pi_1(\S_i,s_0)$.  By Theorem \ref{main}, there exists a continuous map $f: \S_i \to X$ which induces an isomorphism $f_*$ of fundamental groups.

By applying Theorem \ref{basecase} to the homomorphism $f_*^{-1}$, we can find a map $g: X \to \S_1$ such that $g\circ f\circ \beta$ is freely homotopic to $\beta$ for ever loop $\beta$ based at $x_0$.  (Note that $\beta$ might not be homotopic to $g\circ f\circ \beta$ relative to endpoints.)  

 Since $\textbf S_i$ is obtained by only adding finitely many discs, every neighborhood of every point in $\textbf S$ contains a loop which is essential in $\textbf S_i$. Thus $g\circ f$ must fix $\S$ by Lemma \ref{fix}.

Let $D_k$ be a square which was filled in the construction of $\S_i$. Since $f$ maps $\partial D_k$ to a null homotopic loop in a one-dimensional space, the map $f$ must identify two distinct points $x,y$ on the boundary of $D_k$.  However this is a contradiction since $\partial D_k\subset \S$.

\end{proof}

\begin{cor}\label{finitedifferent}
 For any $s_0\in \S$ and any  pair of distinct natural numbers $i,j$; $\pi_1(\S_i, s_0)$ is not isomorphic to $\pi_1(\S_j, s_0)$.
\end{cor}

\begin{proof}
We will assume that $i>j$ and proceed by way of contradiction.  As in the proof of Theorem \ref{onefilled}, we may assume that there are maps $f: \S_i \to \S_j $ and $g: \S_j \to \S_i $ such that $g\circ f\circ \beta$ is freely homotopic to $\beta$ for any loop $\beta$ based at $s_0$.  As before $g\circ f$ must fix $\S$.  

Let $D_k$ be a square which was filled in the construction of $\S_i$.  Notice that $\partial D_k$ must map to a simple  closed curve which is null homotopic in $\S_j$ ($f|_{\textbf S}$ must be injective).  Hence it must map to the boundary of a square which was filled in the construction of $\S_j$.  (A simple closed curve $\alpha$ in the plane is null homotopic if an only if the bounded component of $\mathbb R^2\backslash \im(\alpha)$ is simply connected.)   Since $j>i$, $f$ must map two boundary circles to the same boundary circle which contradicts that fact that $f$ restricted to $\S$ must be injective.

\end{proof}

We will now show how to extend Corollary \ref{finitedifferent} to  certain nice fillings of $\S$.

\begin{defn}\label{Bad/Odd sets}
Let $A\subset \mathbb N$.  We will use $B(\S_A)$ to denote the set of points at which $\S_A$ is not semi-locally simply connected.  Let $K\bigl(\S_A\bigr)$ be the set of accumulation points of $\bigl\{D_n\mid n\in A\bigr\}$, i.e. the set of points such that every neighborhood contains an element of $\bigcup\limits_{n\in A} D_n$.  

We will say that $\S_A$ is a \emph{discrete} filling of $\S$ if $\cl(D_n)\cap K(\S_A) = \emptyset $ for all $n\in A$. 

\end{defn}

The follow lemma is proved similarly to Theorem \ref{onefilled}

\begin{lem}\label{bad set fillings}
If $\S_A$ is a discrete filling then $\pi_1(\S_A, s)$ is not isomorphic to the fundamental group of a one-dimensional Peano continuum.
\end{lem}

\begin{lem}
Let $Y$ be a closed subset of $\S$.  Then there exists a subset $A\subset \mathbb N$ such that $\S_A$ is a discrete filling of $\S$ and $K(\S_A) = Y$.
\end{lem}

\begin{proof}    Let $\mathcal U_n$ be a minimal finite cover of $Y$ by balls in $\mathbb R^2$ of radius $1/n$ (minimal in the sense that no subset of $\mathcal U_n$ covers $Y$).  
Choose $\delta_n$ such that the ball in $\mathbb R^2$ of radius $\delta_n$ about a point in $Y$ is contained in a single element of $\mathcal U_n$. 

Let $A_i$ be a set of indices such that each element of $\mathcal U_i$ contains exactly one square from $\{D_n\}$ with index in $ A_i$ and that square does not intersect $\mathcal N_{\delta_i/2}( Y)$.   Let $A = \cup_i A_i$. Then $\S_A$ is a discrete filling of $\S$ and $K(\S_A) = Y$.
\end{proof}

\begin{lem}\label{bad set fillings}
If $\S_A$ is a discrete filling then $\partial D_n \subset B(\S_A)$ for all $n\in A$  and $B(\S_A) = \S$.
\end{lem}

\begin{proof}
It is clear that $B(\S_A)\subset \S$.  By construction, $\cl(D_n)\cap \cl(D_m) = \emptyset $ for all $n\neq m$.  
For $n\in A$, let $\epsilon_n$ be the distance from $\cl(D_n)$ to $K(\S_A)\cup\left(\bigcup\limits_{ i\in A\backslash\{n\}} D_i\right)$.  Since $\cl(D_n)\cap K(\S_A) = \emptyset $,  $\epsilon_n$ is strictly positive.  This implies that $\mathcal N_{\epsilon_n}(D_n)$ is not simply connected.  Even more $B\bigl(\mathcal N_{\epsilon_n}(D_n)\bigr) = \mathcal N_{\epsilon_n}(D_n)\backslash D_n$.  Thus the only points of $\S$ which might possible have simply connected neighborhoods in $\S_A$ are those in $K(\S_A)$.

Suppose that $x\in \S\cap K(\S_A)$ and let $U$ be a neighborhood of $x$.  We must show that $U$ is not simply connected.  Since $x\in K(\S_A)$, we can find $n\in A$ such that $\cl(D_n)\subset U$.  Therefore $\mathcal N_{\epsilon}(D_n)\subset U$ for some choice of $n\in A$ and $0<\epsilon\leq \epsilon_n$ which implies that $U$ is not simply connected. 

\end{proof}

\begin{prop}\label{nonisomorphic} Suppose that $\S_A$, $\S_B$ are discrete fillings of $\S$.  If $\pi_1(\S_A, s_0)$ is isomorphic to $\pi_1(\S_B, s_0)$, then $K(\S_A)$ is homeomorphic to $K(\S_B)$. 
\end{prop}

\begin{proof}
Suppose that $\S_A$, $\S_B$ are discrete fillings of $\S$ and $\pi_1(\S_A, s_0)$ is isomorphic to $\pi_1(\S_B, s_0)$.  Using Theorem \ref{main} and Lemma \ref{fix}, we can find maps $f: \S_A \to \S_B$ and $g:\S_B\to S_A$ such that $g\circ f$ is the identity on $\S$.  (For discrete fillings $B(\S_A) = \S$ by Lemma \ref{bad set fillings}.)  For $n\in A$ $\partial D_n$ is a null homotopic simple closed curve in $\S\subset \S_A$ which implies that  $f(\partial D_n)$ is a null homotopic simple closed curve in $\S_B$.  (A simple closed curve $\alpha$ in the plane is null homotopic if an only if the bounded component of $\mathbb R^2\backslash \im(\alpha)$ is simply connected.)  Thus $f(\partial D_n) = D_m$ for some $m\in B$.

Thus $f\bigl(K(\S_A)\bigr) \subset K(\S_B)$ and similarly $g\bigr(K(\S_B)\bigl)\subset K(\S_A)$.  Since $\S_A,\S_B\subset \S$  and $g\circ f$ is the identity on $\S$ implies that $K(\S_A)$ is homeomorphic to $K(\S_B)$.

\end{proof}

\begin{cor}\label{uncountable}
There exists an uncountable family of planar Peano continua with pairwise non-isomorphic fundamental groups, all of which are not isomorphic to the fundamental group of any one-dimensional Peano continuum.
\end{cor}

\begin{proof}
Let $\{U_1,U_2,\cdots\}$ be a countable set of disjoint open subsets of $(0,1)\times (0,1)$.  In each $U_i$ we can find a subset $X_i$ such that $X_i\subset\S$ and $X_i$ is homeomorphic to the wedge of $i$-closed intervals.  For $\bar A\subset \mathbb N$, let $X_{\bar A} = \bigcup\limits_{i\in \bar A} X_i$.  It is a trivial exercise to show that that $ X_{\bar A}$ is homeomorphic to $X_{\bar B}$ if and only if $\bar A = \bar B$.

For $\bar A\subset \mathbb N$, choose $A\subset \mathbb N$ such that $K(\S_A) = X_{\bar A}$.  The corollary then follows from Proposition \ref{nonisomorphic}.
\end{proof}

\bibliographystyle{plain}
\bibliography{/home/ckent/NYU_Papers/bib}

\def\cprime{$'$} \def\cprime{$'$}
\begin{thebibliography}{10}

\bibitem{cc3}
J.~W. Cannon and G.~R. Conner.
\newblock On the fundamental groups of one-dimensional spaces.
\newblock {\em Topology Appl.}, 153(14):2648--2672, 2006.

\bibitem{cc4}
J.~W. Cannon and G.~R. Conner.
\newblock The homotopy dimension of codiscrete subsets of the 2-sphere {$\Bbb
  S\sp 2$}.
\newblock {\em Fund. Math.}, 197:35--66, 2007.

\bibitem{ccz}
J.~W. Cannon, G.~R. Conner, and A.~Zastrow.
\newblock One-dimensional sets and planar sets are aspherical.
\newblock {\em Topology Appl.}, 120(1-2):23--45, 2002.
\newblock In memory of T. Benny Rushing.

\bibitem{cm}
G.~Conner and M.~Meilstrup.
\newblock Deforestation of {P}eano continua and minimal deformation retracts.
\newblock {\em Topology Appl.}, 159(15):3253--3262, 2012.

\bibitem{ce}
G.R. Conner and K.~Eda.
\newblock Fundamental groups having the whole information of spaces.
\newblock {\em Topology Appl.}, 146/147:317--328, 2005.

\bibitem{ConnerKentpreprint}
Gregory~R. Conner and Curtis Kent.
\newblock Concerning fundamental groups of locally connected subset of the
  plane.
\newblock {\em Preprint.}

\bibitem{ConnerKentpreprint2}
Gregory~R. Conner and Curtis Kent.
\newblock Homotopy dimension of planar continua.
\newblock {\em Preprint.}

\bibitem{eda}
K.~Eda.
\newblock The fundamental groups of one-dimensional spaces and spatial
  homomorphisms.
\newblock {\em Topology Appl.}, 123(3):479--505, 2002.

\bibitem{eda7}
K.~Eda.
\newblock Homotopy types of one-dimensional {P}eano continua.
\newblock {\em Fund. Math.}, 209(1):27--42, 2010.

\bibitem{KarimovRepovRosickiZastrow}
U.~Karimov, D.~Repov{\v{s}}, W.~Rosicki, and A.~Zastrow.
\newblock On two-dimensional planar compacta not homotopically equivalent to
  any one-dimensional compactum.
\newblock {\em Topology Appl.}, 153(2-3):284--293, 2005.

\end{thebibliography}

\end{document}